\newtheorem{proposition}{Proposition}
\newtheorem*{proposition*}{Proposition}
\newtheorem{theorem}{Theorem}
\newtheorem*{theorem*}{Theorem}
\newtheorem{remark}{Remark}
\newtheorem*{definition*}{Definition}
\newtheorem{lemma}{Lemma}
\newtheorem*{lemma*}{Lemma}
\newtheorem{corollary}{Corollary}
\newtheorem*{corollary*}{Corollary}
\newcommand{\sep}{/\kern-2pt/ }
\begin{document}
 
 \begin{center}
 	\Large\bf ASYMPTOTIC ESTIMATES OF ENTIRE FUNCTIONS OF BOUNDED $\mathbf{L}$-INDEX IN JOINT VARIABLES
 \end{center}
 
 \begin{center}
 	\large\bf \MakeUppercase{A.\ I.\ BANDURA, O.\ B.\ SKASKIV}
 \end{center}

  		\selectlanguage{english}
 \vspace{20pt plus 0.5pt} {\abstract{ \noindent A.\ I.\ Bandura,  O.\ B.\ Skaskiv, \ 
 		\textit{Asymptotic estimates of entire functions of bounded $\mathbf{L}$-index in joint variables} \vspace{3pt} 
 		
 		In this paper, there are obtained growth estimates of  entire in $\mathbb{C}^n$ function of bounded $\mathbf{L}$-index in joint variables.
 		They describe the behaviour of maximum modulus of entire function on a skeleton in a polydisc by 
 		behaviour of the function $\mathbf{L}(z)=(l_1(z),\ldots,l_n(z)),$ where for every 
 		$j\in\{1,\ldots, n\}$ \ $l_j:\mathbb{C}^n\to \mathbb{R}_+$ is a continuous function.
 		We generalised known results of  W. K. Hayman, M. M. Sheremeta, A. D. Kuzyk, M. T. Borduyak, T. O. Banakh and V. O. Kushnir
 		for a wider class of functions $\mathbf{L}.$ 	
 	 	One of our estimates is sharper even for entire in $\mathbb{C}$ functions of bounded $l$-index than Sheremeta's estimate.

 }} 
 
 
 
 \vskip10pt
 
 \allowdisplaybreaks 
 
 \vskip10pt
 
\section{Introduction} Let $l:\mathbb{C}\to \mathbb{R}_+$ be a fixed positive continuous function, where $\mathbb{R}_+=(0,+\infty).$ An entire function $f$ is said to be of bounded $l-$index \cite{vidlindex} if there exists an integer $m,$ independent of $z,$ such that for all $p$ and all $z\in\mathbb{C}$ $\frac{|f^{(p)}(z)|}{l^p(z)p!}\leq \max\{\frac{|f^{(s)}(z)|}{l^s(z)s!}\colon 0\leq s\leq m\}.$
	The least such integer $m$ is called the $l-$index of $f(z)$  and is denoted by $N(f,l).$ If $l(z)\equiv 1$ then we obtain the definition of  function of {\it bounded index }\cite{lepson} and in this case we denote $N(f):=N(f,1).$

In 1970 W. J. Pugh and S. M. Shah  \cite{pughshah1970} posed some questions about properties of entire functions of bounded index. One of those questions is following:
\textit{I.  What are the growth properties of functions of bounded index: (c) is it possible to derive the boundedness (or the unboundedness) of the index from the asymptotic properties of the logarithm of the maximum modulus of $f(z),$ i.e., $\ln M(r, f)$?} 

 W. K. Hayman \cite{Hayman} proved that entire function of bounded index has exponential type which is not greater than $N(f)+1.$ 
Later A. D. Kuzyk and M. M. Sheremeta \cite{vidlindex} obtained growth estimate of entire function of bounded $l-$index. M. M. Sheremeta \cite{sher}, T. O. Banakh and V. O. Kushnir \cite{bankush} deduced analogical inequalities for analytic in  a unit disc and in arbitrary complex domain  function of bounded $l$-index, respectively. 

Clearly, the question of Shah and Pugh can be formulated for entire in $\mathbb{C}^n$ function:
\textit{ What are the growth properties of functions of bounded $\mathbf{L}$-index in joint variables? Is it possible to derive the boundedness (or the unboundedness) of the $\mathbf{L}$-index in joint variables from the asymptotic properties of the logarithm of the maximum modulus of $F(z)$ on a skeleton in a polydisc?} 

M. T. Bordulyak and M. M. Sheremeta \cite{bagzmin} gave an answer to the question if $\mathbf{L}(z)=(l_1(|z_1|),\ldots, l_{n}(|z_n|)),$ 
and  for every  $j\in\{1,\ldots,n\}$ the function $l_j: \mathbb{R}_+\to \mathbb{R}_+$ is continuous. 
In this paper we extend their results for $\mathbf{L}(z)=(l_1(z),\ldots, l_{n}(z)),$ where  \ 
$l_j:\mathbb{C}^n\to \mathbb{R}_+$ is a continuous function for every  $j\in\{1,\ldots,n\}$. In  some sense our results are new even in one-dimensional case 
(see below Corollaries \ref{growthcor2joint} and \ref{growthcor3joint}). 

\section{Notations and definitions}
We need some standard notations. Let $\mathbb{R}_+=[0,+\infty)$.
Denote
$$
\mathbf{0}=(0,\ldots,0)\in\mathbb{R}^n_{+},\ \ 
\mathbf{1}=(1,\ldots,1)\in\mathbb{R}^n_{+},\ \ \mathbf{2}=(2,\ldots,2)\in\mathbb{R}^n_{+},$$ 
$$ \mathbf{e}_j=(0,\ldots,0, \underbrace{1}_{j-\mbox{th place}}, 0,\ldots,0)\in\mathbb{R}^n_{+},
\ \ 
[0,2\pi]^n=\underbrace{[0,2\pi]\times \cdots \times[0,2\pi]}_{n-\text{th times}}.
$$
For $R=(r_1,\ldots,r_n)\in\mathbb{R}^n_{+},$ $\Theta=(\theta_1,\ldots,\theta_n)\in[0,2\pi]^n$ and $K=(k_1,\ldots,k_n)\in \mathbb{Z}^n_{+}$ let us to denote
$\displaystyle\|R\|=r_1+\cdots+r_n,$\ $Re^{i\Theta}=(r_1e^{i\theta_1},\ldots,r_ne^{i\theta_n}),$ 
$K!=k_1!\cdot \ldots \cdot k_n!.$
For $A=(a_1,\ldots,a_n)\in\mathbb{C}^n,$ $B=(b_1,\ldots,b_n)\in\mathbb{C}^n,$   we will use formal notations 
without violation of the existence of these expressions 
\begin{gather*} 
|A|=(|a_1|,\ldots,|a_n|), \ 
A\pm B=(a_1\pm b_1,\ldots,a_n\pm b_n), \ 
AB=(a_1b_1,\cdots,a_nb_n),\\
\mathop{arg} A=(\mathop{arg}a_1,\ldots,\mathop{arg}a_n),
A/B=(a_1/b_1,\ldots,a_n/b_n),\ \
A^B=a_1^{b_1}a_2^{b_2}\cdot \ldots a_n^{b_n},\  
\end{gather*}
and a notation $A<B$ means that $a_j<b_j$ for all $j\in\{1,\ldots,n\};$ similarly, the relation $A\leq B$ is defined.

The polydisc $\{z\in\mathbb{C}^n: \ |z_j-z_j^0|<r_j, \ j=1,\ldots, n\}$ is denoted by $D^n(z^0,R),$ its skeleton $\{z\in\mathbb{C}^n: \ |z_j-z_j^0|=r_j, \ j=1,\ldots, n\}$ is denoted by $T^n(z^0,R),$ and the closed polydisc
$\{z\in\mathbb{C}^n: \ |z_j-z_j^0|\leq r_j, \ j=1,\ldots, n\}$ is denoted by $D^n[z^0,R].$
For a partial derivative of entire function $F(z)=F(z_1,\ldots,z_n)$ we will use the notation
$$
F^{(K)}(z)=\frac{\partial^{\|K\|} F}{\partial z^{K}}= \frac{\partial^{k_1+\cdots+k_n}f}{\partial z_1^{k_1}\ldots \partial z_n^{k_n}},  
\text{ where } K=(k_1,\ldots,k_n)\in \mathbb{Z}^n_{+}.$$

Let $\mathbf{L}(z)=(l_1(z),\ldots, l_{n}(z)),$ where $l_j(z)$ are positive continuous functions of variable  $z\in\mathbb{C}^n,$ $j\in\{1,2,\ldots,n\}.$

		An entire function $F(z)$ is called
  {\it a function of bounded $\mathbf{L}$-index in joint variables,} \cite{sufjointdir,monograph} if there exists
  a number $m\in\mathbb{Z}_{+}$ such that for all
  $z\in\mathbb{C}^{n}$ and $J=(j_{1},j_{2},\ldots , j_{n})\in\mathbb{Z}^{n}_{+}$
  \begin{equation} \label{ineqoz2}
  \frac{|F^{(J)}(z)|}{J!\mathbf{L}^{J}(z)}\leq\max
  \left\{\frac{|F^{(K)}(z)|}{K! \mathbf{L}^{K}(z)}:\
  K\in\mathbb{Z}^{n}_{+},\ \|K\|\leq m\right\}.
  \end{equation}
  
  If $l_j=l_j(|z_j|)$ then we obtain a concept of entire functions of bounded $\mathbf{L}$-index in sense of definition in the papers
  \cite{bagzmin,prostir}.
  If $l_{j}(z_{j})\equiv 1,\ 
  j\in\{1,2,\ldots,n\},$ then the entire function is called a {\it function of
   bounded index in joint variables} \cite{krishna,indsalmassi,nuraypattersonmultivalence2015}.
  
  The least integer $m$ for which  inequality holds is called $\mathbf{L}${\it-index in joint variables of the function} $F$ and is denoted by
  $N(F,\mathbf{L}).$

  For $R\in\mathbb{R}^n_{+},$ $j\in\{1,\ldots,n\}$ and $\mathbf{L}(z)=(l_1(z),\ldots,l_n(z))$ we define
  \begin{gather*}
    \lambda_{1,j}(z_0,R)=\inf\left\{\frac{l_j(z)}{l_j(z^0)}\colon z\in D^n\left[z^0,\frac{R}{\mathbf{L}(z^0)}\right] \right\}, \ \ 
  \lambda_{1,j}(R)= \inf_{z^0\in\mathbb{C}^n}   \lambda_{2,j}(z_0,R), \\
  \lambda_{2,j}(z_0,R)=\sup\left\{\frac{l_j(z)}{l_j(z^0)}\colon z\in D^n\left[z^0,\frac{R}{\mathbf{L}(z^0)}\right] \right\}, \ \ 
    \lambda_{2,j}(R)= \sup_{z^0\in\mathbb{C}^n}   \lambda_{2,j}(z_0,R), 
  \\
  \Lambda_1(R)=(\lambda_{1,j}(R),\ldots,\lambda_{1,n}(R)), \ \  \Lambda_2(R)=(\lambda_{2,1}(R),\ldots,\lambda_{2,n}(R)).
  \end{gather*}
  
  By $Q^n$ we denote a class of functions $\mathbf{L}(z)$ which  for
  every $R\in\mathbb{R}^n_{+}$ and $j\in\{1,\ldots,n\}$ satisfy the condition 
\begin{equation} \label{classqn}
  0<\lambda_{1,j}(R)\leq \lambda_{2,j}(R)<+\infty.
  \end{equation}

\section{Auxiliary propositions}
   \begin{proposition} \label{boundlogderjoint} 
      Let $\mathbf{L}(z)=(l_1(z),\ldots, l_{n}(z)),$ $l_j: \mathbb{C}^n \to \mathbb{C}$ and $\frac{\partial l_j}{\partial z_m}$ be continuous functions in $\mathbb{C}^n$ for all $j,$ $m\in\{1,2,\ldots,n\}.$ 
      If there exist  numbers $P>0$ and $c>0$ such that
      for all $z\in\mathbb{C}^n$ and every $ j, m\in\{1,2,\ldots,n\}$
      \begin{equation} \label{condqnb}
      \frac{1}{c+|l_j(z)|}\left|\frac{\partial l_j(z)}{\partial z_m}\right|\leq P 
      \end{equation}
      then $\mathbf{L}^*\in Q^n,$ where $\mathbf{L}^*(z)=(c+|l_1(z)|,\ldots, c+|l_n(z)|).$
   \end{proposition}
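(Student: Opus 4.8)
The plan is to establish a uniform two-sided bound
$$e^{-A}\le \frac{l_j^*(z)}{l_j^*(z^0)}\le e^{A},\qquad A=\frac{P\,\|R\|}{c},$$
valid for every $z^0\in\mathbb{C}^n$ and every $z\in D^n\!\left[z^0,R/\mathbf{L}^*(z^0)\right]$, where I write $l_j^*(z)=c+|l_j(z)|$. Once this is done, taking the infimum (respectively supremum) of $l_j^*(z)/l_j^*(z^0)$ over such $z$, and then over $z^0$, gives $\lambda_{1,j}^*(R)\ge e^{-A}>0$ and $\lambda_{2,j}^*(R)\le e^{A}<+\infty$, which is exactly the membership $\mathbf{L}^*\in Q^n$ demanded by \eqref{classqn}.

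To obtain this bound I would fix $z^0$ and $z$ as above, join them by the segment $z(t)=z^0+t(z-z^0)$, $t\in[0,1]$, and study $\varphi(t)=l_j^*(z(t))=c+|l_j(z(t))|$. Since $\varphi\ge c>0$, it suffices to control $|\ln\varphi(1)-\ln\varphi(0)|$. Differentiating along the segment, and using that $t\mapsto l_j(z(t))$ is $C^1$ with the modulus being $1$-Lipschitz so that $|\varphi'(t)|\le\left|\frac{d}{dt}l_j(z(t))\right|$, together with the chain rule $\frac{d}{dt}l_j(z(t))=\sum_{m=1}^n \frac{\partial l_j}{\partial z_m}(z(t))\,(z_m-z_m^0)$, I get
$$|\varphi'(t)|\le \sum_{m=1}^n \left|\frac{\partial l_j}{\partial z_m}(z(t))\right|\,|z_m-z_m^0|.$$
Now hypothesis \eqref{condqnb} gives $\bigl|\partial l_j/\partial z_m\bigr|\le P\,(c+|l_j|)=P\,\varphi(t)$, while the membership $z\in D^n\!\left[z^0,R/\mathbf{L}^*(z^0)\right]$ gives $|z_m-z_m^0|\le r_m/l_m^*(z^0)\le r_m/c$, since $l_m^*(z^0)=c+|l_m(z^0)|\ge c$. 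Hence $|\varphi'(t)|\le \frac{P\|R\|}{c}\,\varphi(t)=A\,\varphi(t)$, i.e. $|(\ln\varphi)'(t)|\le A$ almost everywhere, and integrating over $[0,1]$ yields the asserted bound.

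The decisive point is the last estimate $|z_m-z_m^0|\le r_m/c$: it is precisely the uniform lower bound $l_m^*(z^0)\ge c>0$ built into $\mathbf{L}^*$ that cancels the dependence on $z^0$ and makes $A$ depend only on $P$, $c$ and $\|R\|$; this is what lets the pointwise estimates survive the passage to the infimum/supremum over all $z^0\in\mathbb{C}^n$. The only technical nuisance I anticipate is the non-smoothness of $|l_j|$ at the zeros of $l_j$, where $\varphi$ fails to be differentiable; this is harmless, because $\varphi$ is Lipschitz (as a composition of a $C^1$ path with the $1$-Lipschitz modulus), $\ln\varphi$ is Lipschitz (since $\varphi\ge c$), and the bound $|(\ln\varphi)'|\le A$ then holds almost everywhere, which is all the integration requires. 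I would also note explicitly that the chain-rule step treats $l_j$ as a holomorphic function of $z$, so that only the derivatives $\partial/\partial z_m$ appearing in \eqref{condqnb} enter the computation.
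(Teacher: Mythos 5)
Your proof is correct and follows essentially the same route as the paper: both join $z^0$ to $z$ by the segment $z^0+t(z-z^0)$, bound $\bigl|\tfrac{d}{dt}\ln\bigl(c+|l_j|\bigr)\bigr|$ via the hypothesis \eqref{condqnb} and the estimate $|z_m-z_m^0|\le r_m/(c+|l_m(z^0)|)\le r_m/c$, and integrate to get the uniform bound $e^{\pm P\|R\|/c}$ independent of $z^0$. The only (welcome) difference is cosmetic: you obtain both $\lambda_{1,j}$ and $\lambda_{2,j}$ at once from a two-sided Lipschitz bound on $\ln\varphi$ and treat the non-differentiability at zeros of $l_j$ a bit more carefully, whereas the paper proves the upper bound in detail and dispatches the lower one by the analogous inequality $\tfrac{d}{dt}|g|\ge -|g'|$.
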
	
      \begin{proof}
      Clearly, the function $\mathbf{L}^*(z)$ is positive and continuous.
      For given $z\in\mathbb{C}^n,$ $z^0\in\mathbb{C}^n$ we define an analytic curve $\varphi: [0,1]\to \mathbb{C}^n$
      $$\varphi_j(\tau)=z^0_j+\tau (z_j-z_j^0), \ j\in\{1,2,\ldots,n\},$$
      where $\tau\in [0,1].$ 
     It is known that for every continuously differentiable function  $g$ of real variable $\tau$  the inequality  $\frac{d}{dt}|g(\tau)|\leq |g'(\tau)|$  holds except the points  where $g'(\tau)=0.$
      Using restrictions of this lemma, we establish the upper estimate of $\lambda_{2,j}(z_0,R):$
      \begin{gather*}
      \lambda_{2,j}(z_0,R)=\sup \left\{\frac{c+|l_j(z)|}{c+|l_j(z^0)|}\colon z\in D^n\left[z^0,\frac{R}{\mathbf{L}_1(z^0)}\right]   \right\}= \\ 
      = \sup_{z\in D^n\left[z^0,\frac{R}{\mathbf{L}_1(z^0)}\right]  } \left\{ \exp\left\{\ln(c+|l_j(z)|) - \ln(c+|l_j(z^0)|)\right\}
      \right\} = \\
      \!=\! \sup\! \left\{\exp\left\{ \int_0^{1} \frac{d(c+|l_j(\varphi(\tau))|)}{c+|l_j(\varphi(\tau))|} \right \}: z\in D^n\left[z^0,\frac{R}{\mathbf{L}_1(z^0)}\right]  \! \right\} \!\leq\! \\
      \!\leq\! \sup_{ z\in D^n\left[z^0,\frac{R}{\mathbf{L}_1(z^0)}\right] }\!\left\{\! \exp\!\left\{\!\int_0^{1}\! \sum_{m=1}^n \frac{|\varphi_m'(\tau)|}{c+|l_j(\varphi(\tau))|} 
       \left|\frac{\partial l_j(\varphi(\tau))}{\partial z_m}\right| d\tau \right\}  \!\right \} \!\leq\! \\
      \leq \sup_{  z\in D^n\left[z^0,\frac{R}{\mathbf{L}_1(z^0)}\right] } \left\{ \exp\left\{ \int_0^1 \sum_{m=1}^n P|z_m-z_m^0| d\tau \right\}
      \right\} \leq \\ \leq 
      \sup_{z\in D^n\left[z^0,\frac{R}{\mathbf{L}_1(z^0)}\right] } \left\{ \exp\left\{ \sum_{m=1}^n \frac{Pr_j}{c+|l_m(z^0)|} \right\}
      \right\}
       \leq \exp\left(\frac{P}{c}\sum_{m=1}^nr_j \right).
      \end{gather*}
      Hence,  for all $R\geq \mathbf{0}$ \ $\lambda_{2,j}(R)=\sup\limits_{z^0\in\mathbb{C}^n}  \lambda_{2,j}(z^0,\eta) \leq \exp\left(\frac{P}{c}\sum\limits_{m=1}^nr_j \right) <\infty.$
      Using $\frac{d}{dt}|g(t)|\geq -|g'(t)|$ it can be proved that for every $\eta\geq 0$ \  $\lambda_{1,j}(R)\geq \exp\left(-\frac{P}{c}\sum\limits_{m=1}^nr_j \right)>0.$ Therefore, $\mathbf{L}^*\in Q^n.$
   \end{proof}

For estimate of growth of entire functions of bounded $\mathbf{L}$-index in joint variables we will use the following theorem which describes local behaviour of these entire functions.

  \begin{theorem}[\cite{sufjointdir,monograph}] \label{bordte43}\sl
	Let $\mathbf{L}\in Q^n.$ An entire function $F$ is of bounded $\mathbf{L}$-index in joint variables if and only if there exist numbers $R',$ $R'',$ $\mathbf{0}<R'<\mathbf{e}<R'',$  and $p_1=p_1(R',R'')\geq 1$ such that for every $z^0\in\mathbb{C}^n$ 
	\begin{equation}
	\label{bordriv112}
	\max \left\{|F(z)|\colon  z\in T^n\left(z^0,\frac{R''}{\mathbf{L}(z^0)}\right)\right\} \leq p_1  \max \left\{|F(z)|\colon  z\in T^n\left(z^0,\frac{R'}{\mathbf{L}(z^0)}\right)\right\}. 
	\end{equation}
\end{theorem}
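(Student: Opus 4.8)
\noindent\emph{Proof plan.} The device that drives both implications is the rescaling $g_{z^0}(w)=F\!\left(z^0+w/\mathbf L(z^0)\right)$, $w\in\mathbb C^n$, under which $\dfrac{|g_{z^0}^{(K)}(0)|}{K!}=\dfrac{|F^{(K)}(z^0)|}{K!\mathbf L^{K}(z^0)}$ and $\max\{|g_{z^0}(w)|:w\in T^n(0,R)\}=\max\{|F(z)|:z\in T^n(z^0,R/\mathbf L(z^0))\}$. Thus the normalized derivatives in \eqref{ineqoz2} are exactly the Taylor coefficients of $g_{z^0}$ at the origin, while the two sides of \eqref{bordriv112} are maxima of $|g_{z^0}|$ on $T^n(0,R'')$ and $T^n(0,R')$. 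Bounded $\mathbf L$-index with index $\le m$ then says precisely that the whole family $\{g_{z^0}\}_{z^0\in\mathbb C^n}$ has \emph{uniformly} bounded index $\le m$ at $0$, and \eqref{bordriv112} says this family satisfies a uniform two-skeleton ratio bound. I would prove the two directions separately, in each reducing to a statement about a single $g=g_{z^0}$ with constants independent of $z^0$.

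For sufficiency, assume $\mu_g(R'')\le p_1\mu_g(R')$, where $\mu_g(R)=\max_{T^n(0,R)}|g|$ and $a_K$ are the Taylor coefficients of $g$. Cauchy's inequality on the two skeletons gives the two-sided control $|a_K|\le \mu_g(R')/(R')^{K}$ and $|a_K|(R'')^{K}\le\mu_g(R'')\le p_1\mu_g(R')$. Writing $\mu_g(R')\le\sum_K|a_K|(R')^{K}$ and splitting the sum at $\|K\|=m$, the tail is dominated by $p_1\mu_g(R')\sum_{\|K\|>m}(R'/R'')^{K}$; since $R'<\mathbf e<R''$ forces each ratio $r'_j/r''_j<1$, this geometric tail is $\le\tfrac12\mu_g(R')$ once $m$ is large, whence $\mu_g(R')\le C\max_{\|K\|\le m}|a_K|$ with $C$ depending only on $m,R',p_1$ and not on $g$. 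If some $J$ violated the index-$m$ inequality, i.e. $|a_J|>\max_{\|K\|\le m}|a_K|$, then $(R'')^{J}|a_J|\le\mu_g(R'')\le p_1C|a_J|$ would force $(R'')^{J}\le p_1C$, bounding $\|J\|$ uniformly in $z^0$; taking $m$ beyond that bound (self-consistently) removes all violations and gives $N(F,\mathbf L)\le m$.

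For necessity, set $N=N(F,\mathbf L)$ and $G(z)=\max\{|F^{(K)}(z)|/(K!\mathbf L^{K}(z)):\|K\|\le N\}$, so $|F(z)|\le G(z)$ (the term $K=\mathbf 0$). The single-point index bound is \emph{not} enough here: it only yields $|a_K|\le G(z^0)$ for all $K$, and the resulting estimate $\mu_{g_{z^0}}(R'')\le G(z^0)\sum_K(R'')^{K}$ diverges because $r''_j>1$. Instead I would show that $G$ varies slowly on the scale $1/\mathbf L$. Along $\varphi(\tau)=z^0+\tau(z-z^0)$ with $z\in D^n[z^0,R''/\mathbf L(z^0)]$, at a parameter where the maximum defining $G$ is attained at $K^{*}$, the inequality \eqref{ineqoz2} applied at $\varphi(\tau)$ gives $|F^{(K^*+\mathbf e_m)}|/\bigl((K^*+\mathbf e_m)!\mathbf L^{K^*+\mathbf e_m}\bigr)\le G$, hence
\begin{equation*}
\frac{|F^{(K^*+\mathbf e_m)}(\varphi(\tau))|}{|F^{(K^*)}(\varphi(\tau))|}=(k^*_m+1)\,l_m(\varphi(\tau))\,\frac{g_{K^*+\mathbf e_m}}{g_{K^*}}\le (N+1)\,l_m(\varphi(\tau)).
\end{equation*}
Multiplying by $|\varphi_m'(\tau)|=|z_m-z_m^0|\le r''_m/l_m(z^0)$ and invoking $\mathbf L\in Q^n$ via \eqref{classqn} to replace $l_m(\varphi(\tau))$ by $\lambda_{2,m}(R'')\,l_m(z^0)$, one bounds $\frac{d}{d\tau}\ln G(\varphi(\tau))$ by the constant $C=(N+1)\sum_{m=1}^{n} r''_m\lambda_{2,m}(R'')$, independent of $z^0$; the differentiation of the maximum $G$ is justified by the elementary fact $\frac{d}{d\tau}|h(\tau)|\le|h'(\tau)|$ already used in the proof of Proposition~\ref{boundlogderjoint}. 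Integrating gives $G(z)\le e^{C}G(z^0)$ on the polydisc, so $\max_{T^n(z^0,R''/\mathbf L(z^0))}|F|\le e^{C}G(z^0)$; a Cauchy estimate on the inner skeleton supplies the matching lower bound $G(z^0)\le C_2\max_{T^n(z^0,R'/\mathbf L(z^0))}|F|$, and combining the two yields \eqref{bordriv112} with $p_1=C_2e^{C}$.

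The main obstacle is precisely this slow-variation estimate for $G$ in the necessity direction: the naive single-point Taylor bound diverges, and the whole argument rests on exploiting \eqref{ineqoz2} \emph{at every point} of the segment together with the class condition \eqref{classqn} to turn it into a uniform Lipschitz bound for $\ln G$. Care is also needed at the parameters where the active index $K^{*}$ switches or where $F^{(K^{*})}$ vanishes, which I would treat exactly as the degenerate points are treated in Proposition~\ref{boundlogderjoint}.
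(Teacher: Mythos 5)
The paper itself does not prove Theorem \ref{bordte43}: it is quoted from \cite{sufjointdir,monograph}, so your attempt can only be compared with the argument in those sources. Your sufficiency half (inequality \eqref{bordriv112} implies bounded index) is correct and is essentially the standard one: Cauchy's inequalities on the two skeletons of $g_{z^0}$, splitting $\mu_g(R')\le\sum_K|a_K|(R')^K$ at $\|K\|=m$ so that the geometric tail $p_1\sum_{\|K\|>m}(R'/R'')^K$ is at most $\tfrac12$, and then the observation that any violating multi-index $J$ satisfies $(R'')^J\le p_1C$, hence $\|J\|\le m'$ uniformly in $z^0$ because every $r''_j>1$. (One bookkeeping remark: no ``self-consistent'' re-choice of $m$ is needed; take the index bound $\max\{m,m'\}$ and note that a violator at level $\max\{m,m'\}$ is a fortiori a violator at level $m$, hence has $\|J\|\le m'$, a contradiction.)

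The necessity half has a genuine gap at the step you yourself call the crux. You claim that $\frac{d}{d\tau}\ln G(\varphi(\tau))$ is bounded by $C=(N+1)\sum_{m=1}^n r''_m\lambda_{2,m}(R'')$, where $G(z)=\max\{|F^{(K)}(z)|/(K!\mathbf{L}^K(z))\colon\|K\|\le N\}$ carries the \emph{moving} normalization $\mathbf{L}(\varphi(\tau))$. Where this derivative exists it equals $\frac{d}{d\tau}\ln|F^{(K^*)}(\varphi(\tau))|-\sum_{m=1}^n k^*_m\frac{d}{d\tau}\ln l_m(\varphi(\tau))$, and your estimate accounts only for the first term. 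The second term is not controlled by $\mathbf{L}\in Q^n$ at all: the class $Q^n$ assumes the $l_j$ merely continuous, so $\ln l_m(\varphi(\tau))$ need not be differentiable or even of bounded variation; and even for smooth $\mathbf{L}$, condition \eqref{classqn} bounds only the oscillation of $\ln l_j$ over polydiscs $D^n[z^0,R/\mathbf{L}(z^0)]$, never a pointwise derivative (for instance $l(z)=2+\sin g(|z|)$ lies in $Q^1$ for every continuous $g$, however violently $g$ oscillates). The fact $\frac{d}{d\tau}|h|\le|h'|$ that you invoke covers only the factor $|F^{(K^*)}|$. The paper's own Theorem \ref{bordte22} makes the problem visible: differentiating the moving normalization in \eqref{derivindex} produces exactly the terms $k_j(-u_j'(t))^+/l_j$ you dropped, and controlling them is why that theorem must additionally assume continuous differentiability of $\mathbf{L}$ and hypothesis \eqref{suplinf}. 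The repair, which is how the cited sources argue, is to freeze the normalization at $z^0$: work with $h(\tau)=\max\{|F^{(K)}(\varphi(\tau))|/(K!\mathbf{L}^K(z^0))\colon\|K\|\le N\}$, a maximum of absolutely continuous functions; bound $h'(\tau)$ by applying \eqref{ineqoz2} at $\varphi(\tau)$ and then use \eqref{classqn} twice, once to bound $l_m(\varphi(\tau))\le\lambda_{2,m}(R'')l_m(z^0)$ and once to pass between the $\mathbf{L}(\varphi(\tau))$- and $\mathbf{L}(z^0)$-normalizations, at the cost of uniform factors $(\min_j\lambda_{1,j}(R''))^{-N}$ and $(\max_j\lambda_{2,j}(R''))^{N}$. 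This yields $h'(\tau)\le C'h(\tau)$ almost everywhere with $C'$ independent of $z^0$, hence $h(1)\le e^{C'}h(0)=e^{C'}G(z^0)$; your Cauchy estimate on the inner skeleton then finishes the proof as you describe, with a larger but still uniform $p_1$.
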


At first we prove the following lemma. 
\begin{lemma}  \label{qngrowth}
  If $\mathbf{L}\in Q^n,$ then  for every $ j\in\{1,\ldots,n\}$ and for every fixed $z^*\in\mathbb{C}^n$ $|z_j|l_j(z^*+z_j\mathbf{e}_j)\to \infty$ as $|z_j|\to \infty.$ 
\end{lemma}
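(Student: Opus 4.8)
The plan is to argue by contradiction, using only the upper half of the membership $\mathbf{L}\in Q^n$, i.e.\ the finiteness $\lambda_{2,j}(R)<+\infty$. Fix $j\in\{1,\dots,n\}$ and $z^*\in\mathbb{C}^n$, and for $t\in\mathbb{C}$ write $w(t)=z^*+t\mathbf{e}_j$, so that $t$ plays the role of the scalar variable $z_j$ and $w(t)$ differs from $z^*$ only in its $j$-th coordinate. If the conclusion failed, then negating the definition of the limit would furnish a constant $C>0$ and points $t_k\in\mathbb{C}$ with $|t_k|\to\infty$ and $|t_k|\,l_j(w(t_k))\le C$ for every $k$; in particular $l_j(w(t_k))\le C/|t_k|\to 0$.

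The idea is that the decay of $l_j$ at the centre $w(t_k)$ inflates the $j$-th radius of the associated polydisc, and then finiteness of $\lambda_{2,j}$ propagates this decay back to the fixed point $z^*$. I would take $R=(r_1,\dots,r_n)$ with $r_j=2C$ and the remaining $r_m$ equal to $1$ (any positive values work), and look at the closed polydisc $D^n\!\bigl[w(t_k),R/\mathbf{L}(w(t_k))\bigr]$. For the indices $m\ne j$ the $m$-th coordinates of $w(t_k)$ and $z^*$ coincide, so the corresponding distance is $0$; for $m=j$ this distance equals $|t_k|$, whereas the $j$-th radius is $r_j/l_j(w(t_k))\ge r_j|t_k|/C=2|t_k|>|t_k|$. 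Hence $z^*\in D^n\!\bigl[w(t_k),R/\mathbf{L}(w(t_k))\bigr]$ for all $k$, and by the definition of $\lambda_{2,j}$ together with $\mathbf{L}\in Q^n$ we obtain $l_j(z^*)/l_j(w(t_k))\le\lambda_{2,j}(w(t_k),R)\le\lambda_{2,j}(R)<+\infty$.

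Rearranging yields $l_j(z^*)\le\lambda_{2,j}(R)\,l_j(w(t_k))\le\lambda_{2,j}(R)\,C/|t_k|\to 0$, which contradicts the fact that $l_j(z^*)$ is a fixed positive number; this contradiction proves the lemma. The one delicate point is the sizing in the middle step: the radius $r_j$ must be chosen larger than the bound $C$ extracted from the hypothetical bounded sequence, so that the enormous polydisc centred at the distant point $w(t_k)$ still captures the fixed centre $z^*$. This is legitimate precisely because $\lambda_{2,j}(R)$ is finite for \emph{every} $R\in\mathbb{R}^n_{+}$, so we are free to enlarge $r_j$ after $C$ is known; notably, the lower estimate $\lambda_{1,j}(R)>0$ is not used at all.
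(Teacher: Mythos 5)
Your proof is correct and is essentially the paper's own argument: both proceed by contradiction, observing that along the hypothetical bounded sequence the fixed point $z^*$ lies in the polydisc $D^n\bigl[z^*+t_k\mathbf{e}_j,\,R/\mathbf{L}(z^*+t_k\mathbf{e}_j)\bigr]$, so finiteness of $\lambda_{2,j}(R)$ forces the ratio $l_j(z^*)/l_j(z^*+t_k\mathbf{e}_j)$ to stay bounded while the decay $l_j(z^*+t_k\mathbf{e}_j)\le C/|t_k|\to 0$ makes it blow up. The only cosmetic difference is your choice of radius vector ($r_j=2C$ with the other radii $1$) where the paper uses the degenerate vector $C\mathbf{e}_j$; this changes nothing in the substance.
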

\begin{proof}
   On the contrary, if there exist a number $C>0$ and a sequence $z_j^{(m)}\to\infty$ such that $|z_j^{(m)}|l_j(z^*+z_j^{(m)}\mathbf{e}_j)=k_m\leq C,$ i. e. 
   $|z_j^{(m)}| =\frac{k_m}{l_j(z^*+z_j^{(m)}\mathbf{e}_j)}.$ Then 
   \begin{gather*}
   \frac{1}{l_j(z^*+z_j^{(m)}\mathbf{e}_j)}l_j\left(z^*+z_j^{(m)}\mathbf{e}_j-\frac{k_me^{i\mathop{arg} z_j^{(m)}}\mathbf{e}_j}{l_j(z^*+z_j^{(m)}\mathbf{e}_j)}\right)=
   \frac{|z_j^{(m)}|}{k_m}l_j(z^*)\to +\infty, \ j\to +\infty,
   \end{gather*}
that is $\lambda_{2,j}(C\mathbf{e}_j)=+\infty$ and $\mathbf{L}\notin Q^n.$
\end{proof}

\section{Estimates of growth of entire functions}
  By $K^n$ we denote a class of positive continuous functions $\mathbf{L}(z)$ for which  
  there exists $c>0$  such that for
every $R\in\mathbb{R}^n_{+}$ and $j\in\{1,\ldots,n\}$  
$$
\max_{\Theta_1,\Theta_2\in[0,2\pi]^n} \frac{l_j(Re^{i\Theta_2})}{l_j(Re^{i\Theta_1})}\leq c.
$$
If $\mathbf{L}(z)=(l_1(|z_1|,\ldots,|z_n|),\ldots,l_n(|z_1|,\ldots,|z_n|))$ then $\mathbf{L}\in K^n.$
It is easy to prove that $|e^z| +1\in Q^1\setminus K^1,$ but $e^{e^{|z|}}\in K^1\setminus Q^1,$ $z\in\mathbb{C}.$ 
Besides, if $\mathbf{L}_1, \mathbf{L}_2\in K^n$ then $\mathbf{L}_1+\mathbf{L}_2\in K^n$ and $\mathbf{L}_1\mathbf{L}_2\in K^n.$
For simplicity, let us to write $K\equiv K^1$  and 
$M(F,R)= \max \{|F(z)|\colon z\in T^n(\mathbf{0},R)\}.$
\begin{theorem} \label{thgrowthobig}
   Let $\mathbf{L}\in Q^n\cap K^n.$ If an entire function $F$ has bounded $\mathbf{L}$-index in joint variables, then 
\begin{equation}
\label{growthobig}
      \ln M(F,R) = O\left(\min_{\sigma_n\in \mathcal{S}_n} \min_{\Theta\in[0,2\pi]^n} \sum_{j=1}^n \int_{0}^{r_j} l_j(R(j,\sigma_n,t)e^{i\Theta}) dt\right)
 \text{ as } \|R\|\to \infty,
\end{equation}
where $\sigma_n$ is a permutation of $\{1,\ldots,n\},$ $R(j,\sigma_n,t)=(r'_1,\ldots,r'_n),$  $r'_k=\begin{cases}
r^0_k, \text{ if } \sigma_n(k)<j,\\
t, \text{ if } k=j,\\
r_k, \text{ if } \sigma_n(k)>j,
\end{cases}$
$k\in \{1,\ldots,n\},$ 
$R^0=(r_1^0,\ldots,r_n^0)$ is sufficiently large radius, 
$\mathcal{S}_n$ is a set of all permutations of  $\{1,\ldots,n\}.$
\end{theorem}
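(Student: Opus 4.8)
The plan is to use the local estimate \eqref{bordriv112} of Theorem \ref{bordte43} as a single ``step'' that trades one factor $p_1$ for a controlled contraction of the polydisc, and then to iterate this step along a monotone path in radius-space that shrinks the target $R$ down to a fixed large base radius $R^0$, processing the coordinates one at a time in the order prescribed by a permutation $\sigma_n$. Throughout I will use that, since $F$ is entire, the maximum principle gives $M(F,R)=\max\{|F(z)|\colon z\in D^n[\mathbf 0,R]\}$, so that $M(F,\cdot)$ is nondecreasing in each $r_k$ and any closed polydisc contained in $D^n[\mathbf 0,\rho]$ has maximum modulus at most $M(F,\rho)$.

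First I would convert \eqref{bordriv112} into a step inequality for $M(F,\cdot)$. Given a radius vector $\rho$, choose $z^*\in T^n(\mathbf 0,\rho)$ with $|F(z^*)|=M(F,\rho)$, say $z^*=\rho e^{i\alpha}$, and place the centre $z^0$ radially inside $z^*$ so that $z^*$ lies on $T^n(z^0,R''/\mathbf L(z^0))$; by a standard continuity argument this determines $z^0$ through $|z^0_k|=\rho_k-r''_k/l_k(z^0)$. Then \eqref{bordriv112} together with the maximum principle yields $M(F,\rho)\le p_1\,M(F,\rho')$, where the contracted radius satisfies $\rho'_k=\rho_k-(r''_k-r'_k)/l_k(z^0)$. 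Because $\mathbf L\in Q^n$, on the relevant small polydisc the ratios $l_k(z)/l_k(z^0)$ stay between $\lambda_{1,k}$ and $\lambda_{2,k}$, so by \eqref{classqn} the quantity $l_k(z^0)$ is comparable, up to fixed constants, to $l_k$ evaluated at the current radius; this is what permits replacing $l_k(z^0)$ by $l_k$ at a nearby radius throughout, the constants being absorbed in the final $O$.

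Next I would iterate, organising the bookkeeping coordinate by coordinate. Fixing the order $\sigma_n$, I reduce the active coordinate $j$ from $r_j$ down to $r^0_j$ while the already-processed coordinates sit at the base $r^0_k$ and the not-yet-processed ones sit at the target $r_k$: this is exactly the configuration recorded by $R(j,\sigma_n,t)$. Since each step shrinks the active radius by an amount $\asymp 1/l_j$, the number of steps needed to traverse coordinate $j$ is $\asymp (r''_j-r'_j)^{-1}\int_{r^0_j}^{r_j} l_j\bigl(R(j,\sigma_n,t)e^{i\alpha}\bigr)\,dt$, and each step multiplies the maximum modulus by at most $p_1$. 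Telescoping over all $n$ coordinates gives $\ln M(F,R)\le \ln p_1\sum_{j=1}^n(\text{steps for }j)+\ln M(F,R^0)$, that is $\ln M(F,R)=O\bigl(\sum_{j=1}^n\int_0^{r_j} l_j(R(j,\sigma_n,t)e^{i\alpha})\,dt\bigr)$; here the discrepancy between $\int_{r^0_j}^{r_j}$ and $\int_0^{r_j}$ and the additive constant $\ln M(F,R^0)$ are negligible as $\|R\|\to\infty$ by Lemma \ref{qngrowth}, which forces $\int_0^{r_j} l_j\to\infty$. Finally, $\mathbf L\in K^n$ makes $l_j(Re^{i\Theta})$ independent of $\Theta$ up to the factor $c$, so the angles $\alpha$ may be replaced by any $\Theta$, in particular by the minimizing one; and since the left-hand side does not depend on the processing order, the bound holds for every $\sigma_n$, whence \eqref{growthobig} with the minimum over $\mathcal S_n$.

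I expect the main obstacle to be the single-coordinate step itself: the skeleton in \eqref{bordriv112} is full-dimensional, so contracting the active coordinate $j$ inevitably perturbs every other coordinate by $O(1/l_k)$ per step. The delicate point is to show that these parasitic perturbations neither let an already-based coordinate regrow past $r^0_k$ nor spoil the comparison of $l_k(z^0)$ with its value at the nested radius $R(j,\sigma_n,t)$. Controlling them uniformly is precisely where the two-sided bounds $0<\lambda_{1,k}(R)\le\lambda_{2,k}(R)<\infty$ of the class $Q^n$ and the angular bound defining $K^n$ are needed, and making this control uniform over all centres $z^0$ encountered along the path is the heart of the argument.
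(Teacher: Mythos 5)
Your proposal takes a genuinely different route from the paper. The paper never chains the local estimate along a path: it applies Theorem \ref{bordte43} only \emph{once} per radius $R$ --- taking the maximum point $z^*$ on the enlarged skeleton $T^n\left(\mathbf{0},R+\frac{\mathbf{2}}{\mathbf{L}(Re^{i\Theta})}\right)$ and projecting it radially \emph{inward} to $z^0=\frac{z^*R}{R+\mathbf{2}/\mathbf{L}(Re^{i\Theta})}$, so that $|z^0|=R$ and $\mathbf{L}(z^0)=\mathbf{L}(Re^{i\mathop{arg}z^*})$ exactly --- obtaining the single inequality $M\left(F,R+\frac{\mathbf{2}}{\mathbf{L}(Re^{i\Theta})}\right)\leq p_1 M\left(F,R+\frac{\mathbf{1}}{\mathbf{L}(Re^{i\Theta})}\right)$. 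The passage from this one-shot bound to an integral over radii, which you achieve by counting steps, is done in the paper by a different tool entirely: $\ln^+ M(F,R)$ is convex in $(\ln r_1,\ldots,\ln r_n)$ (Ronkin), hence representable as $\int_{r_j^0}^{r_j}A_j(\ldots,t,\ldots)t^{-1}dt$ with $A_j\geq 0$ non-decreasing in $t$; splitting the one-shot bound coordinate by coordinate and using Lemma \ref{qngrowth} (so that $r_jl_j\to\infty$) forces $A_j\leq 2\ln p_1\, r_j l_j(Re^{i\Theta})$, and telescoping the representation from $R$ down to $R^0$ yields \eqref{growthobig}. What convexity buys is precisely the elimination of all chaining bookkeeping --- the part you yourself identify as the ``heart'' of your argument; what your route buys is elementarity, since it needs nothing beyond the maximum principle and the local estimate.

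Two concrete points about your version. First, the obstacle you flag (parasitic shrinkage of the passive coordinates) is not actually delicate: you already recorded that $M(F,\cdot)$ is nondecreasing in each $r_k$, so after every full-dimensional application of \eqref{bordriv112} you may re-inflate each passive coordinate to its nominal value ($r_k$ or $r_k^0$) at zero cost; the path then stays exactly on the segment described by $R(j,\sigma_n,\cdot)$, and the constants from $Q^n$ and $K^n$ give the uniform comparison of step length to $1/l_j$ needed for the Riemann-sum count. (Note also that steps only \emph{shrink} coordinates, so nothing can ``regrow past $r_k^0$''; growth happens only when you invoke monotonicity, which is the harmless direction.) Second, the genuine gap you gloss over is the construction of the centre: $|z_k^0|=\rho_k-r_k''/l_k(z^0)$ is an implicit equation, since $l_k$ depends on $z^0$; making sense of it requires a fixed-point argument (and the condition $\rho_kl_k(z^0)\geq r_k''$, available only for large radii via Lemma \ref{qngrowth}), and this at every one of the many steps. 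This is fixable, but the paper's construction shows how to avoid it altogether: run each step ``outward,'' i.e.\ take the maximum point on the larger skeleton and project it inward, so the centre is explicit and $\mathbf{L}(z^0)$ is computed exactly. With that step and the re-inflation remark, your chaining proof goes through.
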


\begin{proof}
   Let $R>0,$ $\Theta\in[0,2\pi]^n$ and a point $z^*\in T^n(\mathbf{0},R+\frac{\mathbf{2}}{\mathbf{L}(Re^{i\Theta})})$ be a such that 
   $$|F(z^*)|=\max\left\{|F(z)|: z\in T^n\left(\mathbf{0},R+\frac{\mathbf{2}}{\mathbf{L}(Re^{i\Theta})}\right)\right\}.$$
    Denote $z^0=\frac{z^*R}{R+\mathbf{2}/\mathbf{L}(Re^{i\Theta})}.$ Then 
    \begin{gather*}
    |z^0-z^*|=
    \left|\frac{z^*R}{R+\mathbf{2}/\mathbf{L}(Re^{i\Theta})} -z^*\right| = 
    \left|\frac{z^*\mathbf{2}/\mathbf{L}(Re^{i\Theta})}{R+\mathbf{2}/\mathbf{L}(Re^{i\Theta})} \right| = 
    \frac{\mathbf{2}}{\mathbf{L}(Re^{i\Theta})} \text{ and } \\ 
    \mathbf{L}(z^0)=\mathbf{L} \left(\frac{z^*R}{R+\mathbf{2}/\mathbf{L}(Re^{i\Theta})}\right) = \mathbf{L} 
    \left(\frac{(R+\mathbf{2}/\mathbf{L}(Re^{i\Theta}))e^{i\mathop{arg}z^*}R}{R+\mathbf{2}/\mathbf{L}(Re^{i\Theta})}\right) = 
    \mathbf{L} (Re^{i\mathop{arg} z^*}).
    \end{gather*}
    Since $\mathbf{L}\in K^n$ we have that $c\mathbf{L}(z^0)= c\mathbf{L}(Re^{i\mathop{arg}z^*})\geq \mathbf{L}(Re^{i\Theta}) \geq \frac{1}{c}\mathbf{L}(z^0).$
     We consider two 
   skeletons $T^n(z^0,\frac{\mathbf{1}}{\mathbf{L}(z^0)})$ and $T^n(z^0,\frac{\mathbf{2}}{\mathbf{L}(z^0)}).$ 
   By Theorem \ref{bordte43} there exists $p_1=p_1(\frac{\mathbf{1}}{c},c\mathbf{2})\geq 1$ such that 
   \eqref{bordriv112} holds with $R'=\frac{\mathbf{1}}{c},$ $R''=c\mathbf{2},$ 
   i.e. 
   \begin{gather}
  \! \max\!\left\{|F(z)| \colon z\!\in\! T^n\left(\mathbf{0},R+\frac{\mathbf{2}}{\mathbf{L}(Re^{i\Theta})}\!\right)\!\right\}\!=\!|F(z^*)| \!\leq\! 
   \max\left\{|F(z)| \colon z\!\in\! T^n\left(z^0,\frac{\mathbf{2}}{\mathbf{L}(Re^{i\Theta})}\right)\right\} \! \leq\! \nonumber \\ 
\leq       \max\left\{|F(z)| \colon z\in T^n\left(z^0,\frac{c\mathbf{2}}{\mathbf{L}(z^0)}\right)\right\} \leq 
p_1   \max\left\{|F(z)| \colon z\in T^n\left(z^0,\frac{\mathbf{1}}{c\mathbf{L}(z^0)}\right)\right\} \leq \nonumber \\ 
\leq p_1\max\left\{|F(z)|: z\in T^n\left(\mathbf{0},R+\frac{\mathbf{e}}{\mathbf{L}(Re^{i\Theta})}\right)\right\}
    \label{bordriv22}
   \end{gather}
   A function $\ln^+ \max \{|F(z)|\colon z\in T^n(\mathbf{0},R)\}$ is a convex function of the variables  $\ln{r_1},$ $\ldots,$ $\ln{r_n}$ (see \cite{ronkin}, p.~138 in Russian edition or p.~84 in English translation). Hence, the function  admits a representation  
   \begin{gather}
   \ln^+ \max \{|F(z)|\colon z\in T^n(\mathbf{0},R)\} -
   \ln^+ \max \{|F(z)|\colon z\in T^n(\mathbf{0},R+(r_j^0-r_j)\mathbf{e}_j)\}  = \nonumber\\ =
   \int_{r_j^0}^{r_j} \frac{A_j(r_1,\ldots,r_{j-1},t,r_{j+1},\ldots,r_n)}{t}dt \label{presentconvex}
   \end{gather}
   for arbitrary $0<r_j^0\leq r_j<+\infty,$ where the function $A_j(r_1,\ldots,r_{j-1},t,r_{j+1},\ldots,r_n)$ is a positive non-decreasing in variable $t\in(0;+\infty),$ 
    $j\in\{1, \ldots,n\}.$
   
   Using \eqref{bordriv22} we deduce 
   \begin{gather}
   \ln p_1 \ge \ln\max\left\{|F(z)| \colon z\in T^n\left(\mathbf{0},R+\frac{\mathbf{2}}{\mathbf{L}(Re^{i\Theta})}\right)\right\} -\nonumber 
   \\ -\ln \max\left\{|F(z)|: z\in T^n\left(\mathbf{0},R+\frac{\mathbf{e}}{\mathbf{L}(Re^{i\Theta})}\right)\right\}=\nonumber\\
   =\sum_{j=1}^n \ln\max\left\{|F(z)| \colon z\in T^n\left(\mathbf{0},R+\frac{\mathbf{1}+\sum_{k=j}^n \mathbf{e}_k}{\mathbf{L}(Re^{i\Theta})}\right)\right\} -\nonumber\\ - 
   \ln\max\left\{|F(z)| \colon z\in T^n\left(\mathbf{0},R+\frac{\mathbf{1}+\sum_{k=j+1}^n \mathbf{e}_k}{\mathbf{L}(Re^{i\Theta})}\right)\right\}=\nonumber 
   \\ 
   = \sum_{j=1}^n \int_{r_j+1/l_j(Re^{i\Theta})}^{r_j+2/l_j(Re^{i\Theta})} \frac{1}{t} 
   A_j\left(r_1+\frac{1}{l_1(Re^{i\Theta})},\ldots, r_{j-1}+\frac{1}{l_{j-1}(Re^{i\Theta})},t, 
   r_{j+1}+\frac{2}{l_1(Re^{i\Theta})},\ldots, \right.\nonumber\\  \left.
   r_n+\frac{2}{l_n(Re^{i\Theta})}\right)dt  
   \!\geq\! 
   \sum_{j=1}^n \ln\left(1+\frac{1}{r_jl_j(Re^{i\Theta})\!+\!1}\right)  
      A_j\!\left(r_1\!+\!\frac{1}{l_1(Re^{i\Theta})},\ldots, r_{j-1}\!+\!\frac{1}{l_{j-1}(Re^{i\Theta})},r_j, \nonumber\right. \\ \left.
   r_{j+1}+\frac{2}{l_1(Re^{i\Theta})},\ldots, r_n+\frac{2}{l_n(Re^{i\Theta})}\right) \label{estimaconvex}
      \end{gather}
   By Lemma \ref{qngrowth} the function  $r_jl_j(Re^{i\Theta})\to +\infty$ $(r_j\to +\infty).$ Hence, for $j\in\{1,\ldots, n\}$ and $r_i\geq r_i^0$  
    $$\ln\left(1+\frac{1}{r_jl_i(Re^{i\Theta})+1}\right) \sim \frac{1}{r_jl_j(Re^{i\Theta})+1} \geq \frac{1}{2r_jl_j(Re^{i\Theta})}.$$ 
    Thus, for every $j\in\{1,\ldots, n\}$ inequality \eqref{estimaconvex}  implies that 
   \begin{gather*}
   A_j\left(r_1+\frac{1}{l_1(Re^{i\Theta})},\ldots,r_{i-1}+\frac{1}{l_{j-1}(Re^{i\Theta})},r_j,r_{j+1}+\frac{2}{l_{i+1}(Re^{i\Theta})},
   \ldots, r_n+\frac{2}{l_n(Re^{i\Theta})} \right) \leq \\ \leq
   2\ln p_1\ r_j l_j(Re^{i\Theta}).
   \end{gather*}
Let $R^0=(r_1^0,\ldots,r_n^0),$ where every $r_j^0$ is above chosen.
  Applying \eqref{presentconvex} $n$-th times consequently  we obtain 
   \begin{gather*}
      \ln \max \{|F(z)|\colon z\in T^n(\mathbf{0},R)\} =   \ln \max \{|F(z)|\colon z\in T^n(\mathbf{0},R+(r_1^0-r_1)\mathbf{e}_1)\}  + \\ +
      \int_{r_1^0}^{r_1} \frac{A_1(t,r_2,\ldots,r_n)}{t}dt= 
       \ln \max \{|F(z)|\colon z\in T^n(\mathbf{0},R+(r_1^0-r_1)\mathbf{e}_1+(r_2^0-r_2)\mathbf{e}_2)\}  + \\ +
  \int_{r_1^0}^{r_1} \frac{A_1(t,r_2,\ldots,r_n)}{t}dt+  \int_{r_2^0}^{r_2} \frac{A_2(r_1^0,t,r_3\ldots,r_n)}{t}dt 
              =  \ln \max \{|F(z)|\colon z\in T^n(\mathbf{0},R^0)\} + \\ 
       +\sum_{j=1}^n \int_{r_j^0}^{r_j} \frac{A_j(r_1^0,\ldots, r_{j-1}^0,t,r_{j+1},\ldots,r_n)}{t} dt       
             \leq \ln \max \{|F(z)|\colon z\in T^n(\mathbf{0},R^0)\} + \\        
+   2\ln p_1\sum_{j=1}^n \int_{r_j^0}^{r_j} l_j(r_1^0e^{i\theta_1},\ldots, r_{j-1}^0e^{i\theta_{j-1}},te^{i\theta_j},r_{j+1}e^{i\theta_{j+1}},\ldots,r_n  e^{i\theta_n}) dt \leq 
       \\
           \leq \ln \max \{|F(z)|\colon z\in T^n(\mathbf{0},R^0)\} + \\        
       +   2\ln p_1\sum_{j=1}^n \int_{0}^{r_j} l_j(r_1^0e^{i\theta_1},\ldots, r_{j-1}^0e^{i\theta_{j-1}},te^{i\theta_j},r_{j+1}e^{i\theta_{j+1}},\ldots,r_n  e^{i\theta_n}) dt \leq 
       \\ \leq 
   (1+o(1)) 2\ln p_1 \sum_{j=1}^n \int_{0}^{r_j} l_j(r_1^0e^{i\theta_1},\ldots, r_{j-1}^0e^{i\theta_{j-1}},te^{i\theta_j},r_{j+1}e^{i\theta_{j+1}},\ldots,r_n  e^{i\theta_n}) dt.
   \end{gather*}
   The function $\ln \max \{|F(z)|\colon z\in T^n(\mathbf{0},R)\}$ is independent of $\Theta.$  Thus,  the following estimate holds
   \begin{gather*}  
         \ln \max \{|F(z)|\colon z\in T^n(\mathbf{0},R)\} = \\ \!=\!O\left(\!\min_{\Theta\in[0,2\pi]^n}\sum_{j=1}^n \int_{0}^{r_j} l_j(r_1^0e^{i\theta_1},\ldots, r_{j-1}^0e^{i\theta_{j-1}},te^{i\theta_j},r_{j+1}e^{i\theta_{j+1}},\ldots,r_n  e^{i\theta_n}) dt\right),
         \text{ as } \|R\| \to +\infty.
  \end{gather*}
   It is obviously that similar equality can be proved for arbitrary permutation $\sigma_n$ of the set $\{1,2,\ldots,n\}.$ 
    Thus, estimate \eqref{growthobig} holds.
   Theorem \ref{thgrowthobig} is proved.
\end{proof}

\begin{corollary}
	If 	$\mathbf{L}\in Q^n\cap K^n,$ $\min\limits_{\Theta\in[0,2\pi]^n}l_j(Re^{i\Theta})$ is non-decreasing in each variable $r_k,$ $k\in\{1,\ldots,n\},$ 
  entire function $F$ has bounded $\mathbf{L}$-index in joint variables then 
  \begin{equation*}
  \ln \max \{|F(z)|\colon z\in T^n(\mathbf{0},R)\} = O\left(\min_{\Theta\in[0,2\pi]^n} \sum_{j=1}^n \int_{0}^{r_j} l_j(R^{(j)}e^{i\Theta}) dt\right)
  \ \text{ as } \|R\|\to \infty,
  \end{equation*}
  where $R^{(j)}=(r_1,\ldots,r_{j-1},t,r_{j+1},\ldots,r_n).$
\end{corollary}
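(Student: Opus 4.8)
The plan is to deduce the corollary from Theorem~\ref{thgrowthobig} by discarding the permutation minimum and then simplifying the surviving radii with the help of the two standing hypotheses, namely $\mathbf{L}\in K^n$ and the monotonicity of $\min_{\Theta}l_j(Re^{i\Theta})$. First I would start from estimate \eqref{growthobig} and retain only the identity permutation $\sigma_n=\mathrm{id}$; this is legitimate because the right-hand side is a minimum over $\mathcal{S}_n$ and hence does not exceed the summand for any single permutation. This already yields
\begin{equation*}
\ln M(F,R)=O\left(\min_{\Theta\in[0,2\pi]^n}\sum_{j=1}^n\int_0^{r_j}l_j(R(j,\mathrm{id},t)e^{i\Theta})\,dt\right),\quad \|R\|\to\infty,
\end{equation*}
where $R(j,\mathrm{id},t)$ carries the reference value $r_k^0$ in each coordinate $k<j$, the variable $t$ in the $j$-th coordinate, and the full radius $r_k$ in each coordinate $k>j$. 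Comparing with the target vector $R^{(j)}=(r_1,\dots,r_{j-1},t,r_{j+1},\dots,r_n)$, the sole discrepancy is that $R(j,\mathrm{id},t)$ keeps $r_k^0$ instead of $r_k$ in the coordinates $k<j$, and in the regime $R\ge R^0$ governing the asymptotics we have $r_k^0\le r_k$ there.

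The key step is to raise each $r_k^0$ to $r_k$ at the cost of a bounded multiplicative factor. Set $\widetilde l_j(R):=\min_{\Theta\in[0,2\pi]^n}l_j(Re^{i\Theta})$, which by assumption is non-decreasing in every coordinate. Since $\mathbf{L}\in K^n$, for all angles $\Theta,\Theta'$ one has $l_j(Re^{i\Theta})\le c\,\widetilde l_j(R)\le c\,l_j(Re^{i\Theta'})$. Combining this with the monotonicity of $\widetilde l_j$ and the inequality $r_k^0\le r_k$ produces the pointwise chain
\begin{equation*}
l_j(R(j,\mathrm{id},t)e^{i\Theta})\le c\,\widetilde l_j(R(j,\mathrm{id},t))\le c\,\widetilde l_j(R^{(j)})\le c\,l_j(R^{(j)}e^{i\Theta'}),
\end{equation*}
valid for all $\Theta,\Theta'$, every $t\in(0,r_j]$ and every $j\in\{1,\dots,n\}$. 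Integrating in $t$, summing over $j$, and then taking the minimum over $\Theta$ on the left and over $\Theta'$ on the right gives
\begin{equation*}
\min_{\Theta}\sum_{j=1}^n\int_0^{r_j}l_j(R(j,\mathrm{id},t)e^{i\Theta})\,dt\le c\,\min_{\Theta'}\sum_{j=1}^n\int_0^{r_j}l_j(R^{(j)}e^{i\Theta'})\,dt.
\end{equation*}
Inserting this into the displayed consequence of Theorem~\ref{thgrowthobig} yields precisely the asserted estimate, since an $O$-bound by the smaller quantity is a fortiori an $O$-bound by the larger one.

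The main thing to watch is that the hypothesis grants monotonicity only of the angular minimum $\widetilde l_j$ and not of $l_j$ itself, while the minimum over $\Theta$ in the theorem couples all $n$ integrals through a single angle. Both difficulties are dissolved by the $K^n$ estimate $l_j(Re^{i\Theta})\le c\,\widetilde l_j(R)$: it lets me descend to $\widetilde l_j$, where the monotonicity applies, and then ascend back to $l_j$ with an independent angle, thereby decoupling the two minima at the price of the constant $c$. The only other point requiring a word is the regime where $\|R\|\to\infty$ with some coordinate $r_k$ bounded, but there the integration in Theorem~\ref{thgrowthobig} already runs over $r_k\ge r_k^0$, so the comparison $r_k^0\le r_k$ is never violated.
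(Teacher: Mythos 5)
Your derivation is correct and is essentially the argument the paper leaves implicit: the corollary appears immediately after Theorem~\ref{thgrowthobig} with no separate proof, and the intended route is exactly yours---discard the minimum over permutations (keeping the identity), then use the $K^n$ condition $l_j(Re^{i\Theta})\le c\min_{\Theta'}l_j(Re^{i\Theta'})$ together with the assumed monotonicity of $\min_{\Theta}l_j(Re^{i\Theta})$ to upgrade the fixed radii $r_k^0$ to $r_k$ at the cost of the constant $c$. The one caveat, which you correctly flag, is that the comparison $r_k^0\le r_k$ (and the theorem's estimate itself) is only available in the regime $R\ge R^0$, a restriction already implicit in the proof of Theorem~\ref{thgrowthobig}, so your proof inherits rather than introduces it.
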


Note that Theorem \ref{thgrowthobig} is new too for $n=1$ because we replace the condition $l=l(|z|)$ by the condition $l\in K,$ i.e. 
 there exists $c>0$  such that for
every $r>0$ 
$\max\limits_{\theta_1,\theta_2\in[0,2\pi]} \frac{l(re^{i\theta_2})}{l(re^{i\theta_1})}\leq c.$ 
Particularly, the following proposition is valid. 
\begin{corollary} \label{growthcor2joint}
	If 	$l\in Q\cap K$ and an entire in $\mathbb{C}$ function $f$ has bounded $l$-index then 
\begin{equation*}
\ln \max \{|f(z)|\colon |z|=r\} = O\left(\min_{\theta\in[0,2\pi]}  \int_{0}^{r} l(te^{i\Theta}) dt\right)
\ \text{ as } r\to \infty.
\end{equation*}
\end{corollary}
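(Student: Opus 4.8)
The plan is to obtain this as the specialization of Theorem~\ref{thgrowthobig} to dimension $n=1$, so that no new analytic work is required. First I would observe that for $n=1$ the hypothesis matches up: by the convention $K\equiv K^1$ stated above and with $Q=Q^1$, the assumption $l\in Q\cap K$ is exactly $\mathbf{L}\in Q^1\cap K^1$ for the one-component map $\mathbf{L}=(l)$; moreover an entire $f$ of bounded $l$-index is precisely an entire function of bounded $\mathbf{L}$-index in joint variables when $n=1$, since inequality \eqref{ineqoz2} collapses to the defining inequality of $l$-index. Hence Theorem~\ref{thgrowthobig} applies verbatim.

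Next I would unwind the right-hand side of \eqref{growthobig} in the one-variable case. The symmetric group $\mathcal{S}_1$ of permutations of $\{1\}$ consists of the identity alone, so the outer minimum over $\sigma_n$ is vacuous and disappears. The radius $R$ is a single positive number $r=r_1$, the multi-angle $\Theta=(\theta)$ ranges over $[0,2\pi]$, and the interpolated radius $R(j,\sigma_1,t)$ degenerates to the scalar $t$: there is no index $k$ with $\sigma_1(k)<1$ or $\sigma_1(k)>1$, so only the case $k=j=1$ survives and yields $r'_1=t$. Finally $M(F,R)=\max\{|f(z)|\colon z\in T^1(\mathbf{0},R)\}=\max\{|f(z)|\colon |z|=r\}$, and the sum $\sum_{j=1}^n$ reduces to its single term $j=1$.

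Substituting these reductions into \eqref{growthobig} gives directly
\[
\ln\max\{|f(z)|\colon |z|=r\}=O\!\left(\min_{\theta\in[0,2\pi]}\int_0^r l(te^{i\theta})\,dt\right)\quad\text{as } r\to\infty,
\]
which is the assertion of the corollary.

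The only step requiring care is the bookkeeping: checking that the combinatorial data ($\sigma_n$, the interpolated radius $R(j,\sigma_n,t)$, and the index sum over $j$) all collapse correctly when $n=1$. I do not expect any genuine obstacle here, since all the analytic content---the use of the local-behaviour Theorem~\ref{bordte43}, the convexity of $\ln^+\max\{|F(z)|\colon z\in T^n(\mathbf{0},R)\}$ in the variables $\ln r_j$, and the growth Lemma~\ref{qngrowth}---is already carried out inside the proof of Theorem~\ref{thgrowthobig}; the corollary is purely a transcription of the $n$-dimensional notation into the scalar setting.
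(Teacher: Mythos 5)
Your proposal is correct and coincides with the paper's own treatment: the paper offers no separate argument for Corollary~\ref{growthcor2joint}, presenting it exactly as the $n=1$ specialization of Theorem~\ref{thgrowthobig}, which is what you carry out. Your bookkeeping (triviality of $\mathcal{S}_1$, collapse of $R(j,\sigma_1,t)$ to the scalar $t$, identification of bounded $l$-index with bounded $\mathbf{L}$-index in joint variables for $n=1$, and $Q=Q^1$, $K=K^1$) is accurate, so nothing further is needed.
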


W. K. Hayman, A. D. Kuzyk, M M. Sheremeta, V. O.  Kushnir and T. O. Banakh \cite{Hayman,vidlindex,bankush} improved an estimate \eqref{growthobig} 
 by other conditions on the function $l$ for a case $n=1.$ M. T. Bordulyak and M. M. Sheremeta \cite{bagzmin} deduced similar results for entire functions of bounded $\mathbf{L}$-index in joint variables, if $l_j=l_j(|z_j|),$ $j\in\{1,\ldots,n\}.$
Using their method   we will generalise the estimate for $l_j:\mathbb{C}^n\to \mathbb{R}_+.$   

Let us to denote $a^+=\max\{a,0\},$  $u_j(t)=u_j(t,R,\Theta)=l_j(\frac{tR}{r_m}e^{i\Theta}),$   where $a\in\mathbb{R},$ $t\in\mathbb{R}_+,$ $j\in\{1,\ldots,n\},$ 
$r_m\neq 0.$
\begin{theorem} 
   \label{bordte22}
   Let $\mathbf{L}(Re^{i\Theta})$  be a positive continuously differentiable function in each variable $r_k\in[0,+\infty),$ $k\in\{1,\ldots,n\},$  $\Theta\in[0,2\pi]^n.$  If an entire function $F$ has bounded $\mathbf{L}$-index $N=N(F,\mathbf{L})$ in joint variables then for every 
   $\Theta\in[0,2\pi]^n$ and for every $R\in\mathbb{R}^n_+$   ($r_m\neq 0$)  and $S\in\mathbb{Z}^n_+$
\begin{gather}
\ln\max\left\{\frac{|F^{(K)}(Re^{i\Theta})|}{K!\mathbf{L}^K(Re^{i\Theta})}:\ \|K\|\leq N \right\}\leq 
\ln\max\left\{\frac{|F^{(K)}(0)|}{K!\mathbf{L}^K(0)}:\ \|K\|\leq N \right\}+ \nonumber \\ +
\int_0^{r_m} \left(\max_{\|K\|\leq N}\left\{
\sum_{j=1}^n \frac{r_j}{r_m}(k_j+1)l_j\left(\frac{\tau}{r_m} Re^{i\Theta}\right) 
\right\} + \max_{\|K\|\leq N}\left\{ \sum_{j=1}^n \frac{k_j(-u_j'(\tau))^+}{l_j\left(\frac{\tau}{r_m} Re^{i\Theta}\right)}  \right\}\right)d\tau,
\label{conclusiongrowth}
\end{gather}   

If, in addition,  there exists $C>0$ such that the function $\mathbf{L}$ satisfies inequalities
\begin{gather} \label{suplinf}
\sup\limits_{R\in\mathbb{R}^n_+} \max\limits_{t\in[0,r_m]}\max\limits_{\Theta\in[0,2\pi]^n} \max\limits_{1\leq j\leq n}\frac{(-(u_j(t,R,\Theta))'_t)^+}{\frac{r_j}{r_m} l_j^2(\frac{t}{r_m}Re^{i\Theta})} \leq C,
\\ \label{intliminf}
\max\limits_{\Theta\in[0,2\pi]^n} \int_0^{r_m}  \sum_{j=1}^n \frac{r_j}{r_m}l_j\left(\frac{\tau}{r_m}R e^{i\Theta}\right) d\tau\to +\infty 
\text{ as } \|R\|\to +\infty
\end{gather}
 then 
\begin{equation}
\label{nonzerogrowth}
\varlimsup_{\|R\|\to +\infty}   \frac{\ln \max\{|F(z)\colon \ z\in T^n(\mathbf{0},R)\}}{\max\limits_{\Theta\in[0,2\pi]^n} \int_0^{r_m}  \sum_{j=1}^n \frac{r_j}{r_m}l_j\left(\frac{\tau}{r_m}R e^{i\Theta}\right) d\tau}\leq (C+1) N+1.
\end{equation}

And if $r_m(-(u_j(t,R,\Theta))'_t)^+/(r_j l_j^2(\frac{t}{r_m}Re^{i\Theta}))\to 0$ and \eqref{intliminf} holds as $\|R\|\to +\infty$ 
uniformly for all $\Theta\in[0,2\pi]^n,$ $j\in\{1,\ldots,n\},$ $t\in[0,r_m]$  then 
\begin{equation}
\label{zerogrowth}
\varlimsup_{\|R\|\to +\infty}   \frac{\ln \max\{|F(z)\colon \ z\in T^n(\mathbf{0},R)\}}{\max\limits_{\Theta\in[0,2\pi]^n} \int_0^{r_m}  \sum_{j=1}^n \frac{r_j}{r_m}l_j\left(\frac{\tau}{r_m}R e^{i\Theta}\right) d\tau}\leq N+1.
\end{equation}
\end{theorem}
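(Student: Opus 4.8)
The plan is to estimate the growth along rays by tracking the logarithm of the maximal normalized derivative and differentiating it in the radial parameter. Fix $\Theta\in[0,2\pi]^n$ and $R$ with $r_m\neq 0$, set $z(\tau)=\frac{\tau}{r_m}Re^{i\Theta}$ for $\tau\in[0,r_m]$ (so $z(0)=\mathbf{0}$, $z(r_m)=Re^{i\Theta}$ and $z_j'(\tau)=\frac{r_j}{r_m}e^{i\theta_j}$), denote $G(z)=\max\{\frac{|F^{(K)}(z)|}{K!\mathbf{L}^K(z)}:\ \|K\|\leq N\}$, and put $g(\tau)=\ln G(z(\tau))$. Since $F$ has $\mathbf{L}$-index $N$, the quantity $G$ is strictly positive everywhere, so $g$ is finite and continuous; it is a maximum of finitely many functions $\ln\frac{|F^{(K)}(z(\tau))|}{K!\mathbf{L}^K(z(\tau))}$, each of class $C^1$ on the open set where $F^{(K)}\neq 0$. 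At every $\tau$ the maximum is attained only by indices $K^*$ with $F^{(K^*)}(z(\tau))\neq 0$ (else $b_{K^*}=G>0$ would force $F^{(K^*)}\neq 0$), so near each point $g$ is the maximum of finitely many $C^1$ functions and hence locally Lipschitz; therefore $g$ is absolutely continuous and at a.e. $\tau$ its derivative equals that of the active branch.

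To bound $g'(\tau)$ I would differentiate the active branch $\ln\frac{|F^{(K^*)}|}{K^*!\mathbf{L}^{K^*}}$. For the modulus I use $\frac{d}{d\tau}|F^{(K^*)}(z(\tau))|\leq\bigl|\frac{d}{d\tau}F^{(K^*)}(z(\tau))\bigr|\leq\sum_{j=1}^n\frac{r_j}{r_m}|F^{(K^*+\mathbf{e}_j)}(z(\tau))|$. Invoking the definition \eqref{ineqoz2} of bounded $\mathbf{L}$-index, I bound $|F^{(K^*+\mathbf{e}_j)}|\leq(k_j^*+1)K^*!\mathbf{L}^{K^*}l_j\,G(z(\tau))$; since on the active branch $|F^{(K^*)}|=K^*!\mathbf{L}^{K^*}G(z(\tau))$, the ratio collapses to $\frac{d}{d\tau}\ln|F^{(K^*)}|\leq\sum_j\frac{r_j}{r_m}(k_j^*+1)l_j(z(\tau))$, which is the first term of the integrand of \eqref{conclusiongrowth}. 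Differentiating the factor $\mathbf{L}^{-K^*}$ gives $-\sum_j k_j^*\frac{u_j'(\tau)}{l_j(z(\tau))}\leq\sum_j\frac{k_j^*(-u_j'(\tau))^+}{l_j(z(\tau))}$, the second term. Replacing $K^*$ by the maximum over $\|K\|\leq N$ in each group and integrating, using $g(r_m)-g(0)\leq\int_0^{r_m}g'(\tau)\,d\tau$, yields \eqref{conclusiongrowth}.

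For \eqref{nonzerogrowth} I would note that on the skeleton $|F(Re^{i\Theta})|=\frac{|F^{(0)}(Re^{i\Theta})|}{0!\,\mathbf{L}^{0}}\leq G(Re^{i\Theta})$, so choosing $\Theta$ realizing $M(F,R)$ bounds the numerator by the left side of \eqref{conclusiongrowth}. It remains to estimate the two integrals. The first is controlled by $\max_{\|K\|\leq N}\sum_j\frac{r_j}{r_m}(k_j+1)l_j\leq(N+1)\sum_j\frac{r_j}{r_m}l_j$, since $\sum_j k_j\leq N$ and one concentrates the weight $N$ on the largest term. For the second, $\max_{\|K\|\leq N}\sum_j\frac{k_j(-u_j')^+}{l_j}\leq N\max_j\frac{(-u_j')^+}{l_j}$, and \eqref{suplinf} converts $\frac{(-u_j')^+}{l_j}\leq C\frac{r_j}{r_m}l_j$, giving the bound $CN\sum_j\frac{r_j}{r_m}l_j$. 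Hence the whole integrand is at most $((C+1)N+1)\sum_j\frac{r_j}{r_m}l_j$; dividing by the denominator of \eqref{nonzerogrowth}, observing that $\ln G(\mathbf{0})$ is a constant while the denominator tends to $+\infty$ by \eqref{intliminf}, and passing to $\varlimsup$ gives the claim. The estimate \eqref{zerogrowth} follows by the identical computation, except that the stronger hypothesis $\frac{r_m(-u_j')^+}{r_j l_j^2}\to 0$ uniformly lets me replace $C$ by an arbitrary $\epsilon>0$ for all large $\|R\|$, so the constant becomes $N+1+\epsilon N$, and letting $\epsilon\downarrow 0$ yields $N+1$.

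The main obstacle is the analytic justification in the first paragraph: establishing that $g$ is absolutely continuous and that its a.e. derivative is that of an active branch whose derivative $F^{(K^*)}$ does not vanish (so that $\frac{d}{d\tau}\ln|F^{(K^*)}|$ is legitimate). Everything afterward is the algebra of the bounded-index inequality together with the two elementary weight-concentration maximizations, with \eqref{suplinf}--\eqref{intliminf} entering only at the end to dominate the second integral and to annihilate the constant term $\ln G(\mathbf{0})$.
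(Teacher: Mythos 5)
Your proposal is correct and follows essentially the same route as the paper: both define the maximum of normalized derivatives along the ray $\tau\mapsto\frac{\tau}{r_m}Re^{i\Theta}$, establish a differential inequality for it via the bounded-index definition, integrate to get \eqref{conclusiongrowth}, and then bound the two integrand terms by $(N+1)\sum_j\frac{r_j}{r_m}l_j$ and $NC\sum_j\frac{r_j}{r_m}l_j$ respectively to obtain \eqref{nonzerogrowth} and \eqref{zerogrowth}. The only cosmetic differences are that you differentiate $\ln g$ with a self-contained ``active branch'' justification of absolute continuity, whereas the paper differentiates the ratio directly and cites Sheremeta's lemma on maxima of absolutely continuous functions.
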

\begin{proof}
   Let $R\in \mathbb{R}\setminus \{\square\},$ $\Theta\in[0,2\pi]^n.$ Then there exists at least one $r_m\neq 0.$ Denote $\alpha_j=\frac{r_j}{r_m},$ $j\in\{1,\ldots,n\}$ and $A=(\alpha_1,\ldots,\alpha_n).$  
    We consider a function 
   \begin{equation} \label{defingt}
   g(t)=\max\left\{\frac{|F^{(K)}(Ate^{i\Theta})|}{K!\mathbf{L}^K(Ate^{i\Theta})}:\ \|K\|\leq N \right\},
   \end{equation}
   where $At=(\alpha_1t,\ldots,\alpha_nt),$ $Ate^{i\Theta}= (\alpha_1te^{i\theta_1},\ldots,\alpha_nte^{i\theta_n}).$  
   
   Since the function $\frac{|F^{(K)}(Ate^{i\Theta})|}{K!\mathbf{L}^{K}(Ate^{i\Theta})}$ is  continuously differentiable by real $t\in[0,+\infty),$
  outside the zero set of function $|F^{(K)}(Ate^{i\Theta})|,$
  the function $g(t)$ is a continuously differentiable function on $[0,+\infty),$ except, perhaps, for a countable set of points.
   
 Therefore, using    the inequality
$\frac{d}{dr} |g(r)|\leq |g'(r)|$  which holds except for the points $r=t$ such that $g(t)=0,$  we deduce 
\begin{gather}
\frac{d}{dt} \left(\frac{|F^{(K)}(Ate^{i\Theta})|}{K!\mathbf{L}^K(Ate^{i\Theta})}\right)= \frac{1}{K!\mathbf{L}^K(Ate^{i\Theta})} 
\frac{d}{dt}|F^{(K)}(Ate^{i\Theta})|+|F^{(K)}(Ate^{i\Theta})| \frac{d}{dt}\frac{1}{K!\mathbf{L}^K(Ate^{i\Theta})} \leq \nonumber \\
\leq  \frac{1}{K!\mathbf{L}^K(Ate^{i\Theta})} \left|\sum_{j=1}^n F^{(K+\mathbf{e}_j)}(Ate^{i\Theta}) \alpha_j e^{i\theta_j}\right|-
\frac{|F^{(K)}(Ate^{i\Theta})|}{K!\mathbf{L}^K(Ate^{i\Theta})} 
\sum_{j=1}^n  \frac{k_j u'_j(t)}{l_j(Ate^{i\Theta})} \leq \nonumber\\
\leq \sum_{j=1}^n  \frac{|F^{(K+\mathbf{e}_j)}(Ate^{i\Theta})|}{(K+\mathbf{e}_{j})!\mathbf{L}^{K+\mathbf{e}_j}(Ate^{i\Theta})} 
\alpha_j(k_j+1)l_j(Ate^{i\Theta})+ 
\frac{|F^{(K)}(Ate^{i\Theta})|}{K!\mathbf{L}^K(Ate^{i\Theta})} \sum_{j=1}^n  \frac{k_j (-u'_j(t))^+}{l_j(Ate^{i\Theta})} \label{derivindex}
\end{gather}
 For absolutely continuous functions
$h_1,$ $h_2, $ $\ldots,$ $h_k$ and $h(x):=\max\{h_j(z): 1\leq j\leq k\},$
\  $h'(x)\leq \max\{h'_j(x): 1\leq j\leq k \},$ $x\in[a,b]$  (see \cite[Lemma~4.1, p.~81]{sher}). The function $g$ is absolutely continuous, therefore, from \eqref{derivindex} it follows that
   \begin{gather*}
   g'(t) \leq \max \left\{\frac{d}{dt} \left(\frac{|F^{(K)}(Ate^{i\Theta})|}{K!\mathbf{L}^K(Ate^{i\Theta})}\right)\colon 
   \|K\|\leq N \right\} \leq \\ \leq 
      \max_{ \|K\|\leq N} \left\{ 
\sum_{j=1}^n  \frac{\alpha_j(k_j+1)l_j(Ate^{i\Theta})|F^{(K+\mathbf{e}_j)}(Ate^{i\Theta})|}{(K+\mathbf{e}_{j})!\mathbf{L}^{K+\mathbf{e}_j}(Ate^{i\Theta})} 
+ 
\frac{|F^{(K)}(Ate^{i\Theta})|}{K!\mathbf{L}^K(Ate^{i\Theta})} \sum_{j=1}^n  \frac{k_j (-u'_j(t))^+}{l_j(Ate^{i\Theta})}    \right\}\leq \\
   \leq g(t) \left( \max_{ \|K\|\leq N}\left\{ 
   \sum_{j=1}^n \alpha_j(k_j+1)l_j(Ate^{i\Theta})\right\}+ 
   \max_{ \|K\|\leq N}\left\{ \sum_{j=1}^n \frac{k_j(-u_j'(t))^+}{l_j(Ate^{i\Theta})}\right\}\right)=\\ =
   g(t)(\beta(t)+\gamma(t)),
   \end{gather*}
   where 
   $   \beta(t)=\max_{\|K\|\leq N}\left\{
   \sum_{j=1}^n \alpha_j(k_j+1)l_j(Ate^{i\Theta}) \right\},$
   $\gamma(t)=\max_{\|K\|\leq N}\left\{ \sum_{j=1}^n \frac{k_j(-u_j'(t))^+}{l_j(Ate^{i\Theta})} \right\}.
   $
   Thus, 
   $\frac{d}{dt} \ln g(t)\leq \beta(t)+\gamma(t)$
   and 
   \begin{equation} \label{deqr1}
  g(t)\leq g(0)\exp \int_0^t (\beta(\tau)+\gamma(\tau))d\tau, 
  \end{equation} because $g(0)\neq 0.$
 But $r_mA=R.$ Substituting $t=r_m$ in \eqref{deqr1} and taking  into account \eqref{defingt},
  we deduce 
\begin{gather*}
\ln\max\left\{\frac{|F^{(K)}(Re^{i\Theta})|}{K!\mathbf{L}^K(Re^{i\Theta})}:\ \|K\|\leq N \right\}\leq 
\ln\max\left\{\frac{|F^{(K)}(0)|}{K!\mathbf{L}^K(0)}:\ \|K\|\leq N \right\}+ \\ +
\int_0^{r_m} \left(\max_{\|K\|\leq N}\left\{
\sum_{j=1}^n \alpha_j(k_j+1)l_j(A\tau e^{i\Theta}) 
\right\} + \max_{\|K\|\leq N}\left\{ \sum_{j=1}^n \frac{k_j(-u_j'(\tau))^+}{l_j(A\tau e^{i\Theta})} \right\}\right)d\tau,
\end{gather*}   
   i.e.  \eqref{conclusiongrowth} is proved. 
Denote $\widetilde{\beta}(t)= \sum_{j=1}^n \alpha_jl_j(Ate^{i\Theta}).$   
   If, in addition, \eqref{suplinf}-\eqref{intliminf} hold  
   then  for some $K^*,$ $\|K^*\|\leq N$ and $\widetilde{K},$ $\|\widetilde{K}\|\leq N,$ 
   \begin{gather*}
\frac{\gamma(t)}{\widetilde{\beta}(t)}= \frac{\sum_{j=1}^n \frac{k^*_j(-u_j'(t))^+}{l_j(Ate^{i\Theta})}}{\sum_{j=1}^n \alpha_jl_j(Ate^{i\Theta})}\leq 
\sum_{j=1}^n k^*_j\frac{(-u_j'(t))^+}{\alpha_jl^2_j(Ate^{i\Theta})}\leq 
\sum_{j=1}^n  k^*_j \cdot C \leq NC \text{ and } \\
\frac{\beta(t)}{\widetilde{\beta}(t)}=\frac{ \sum_{j=1}^n \alpha_j(\tilde{k}_j+1)l_j(Ate^{i\Theta})}{\sum_{j=1}^n \alpha_jl_j(Ate^{i\Theta})} =
1+\frac{ \sum_{j=1}^n \alpha_j\tilde{k}_jl_j(Ate^{i\Theta})}{\sum_{j=1}^n \alpha_jl_j(Ate^{i\Theta})}
 \leq 1+
\sum_{j=1}^n \tilde{k}_j\leq 1+ N.
   \end{gather*}
   
   But 
   $|F(Ate^{i\Theta})|\leq g(t)\leq g(0) \exp \int_0^t (\beta(\tau)+\gamma(\tau))d\tau$ 
    and $r_mA=R.$ Then we put $t=r_m$ 
   and obtain 
   \begin{gather*}
   \ln \max\{|F(z)\colon \ z\in T^n(\mathbf{0},R)\}=\ln \max_{\Theta\in[0,2\pi]^n}|F(Re^{i\Theta})| \leq \ln \max_{\Theta\in[0,2\pi]^n} g(r_m) \leq 
   \\ \leq 
   \ln g(0)+
  \max_{\Theta\in[0,2\pi]^n} \int_0^{r_m} (\beta(\tau)+\gamma(\tau))d\tau   \leq\\ 
   \leq \ln g(0)+(NC+N+1)    \max_{\Theta\in[0,2\pi]^n} \int_0^{r_m} \widetilde{\beta}(\tau) d\tau = \\ 
  = \ln g(0)+(NC+N+1) \max_{\Theta\in[0,2\pi]^n}  \int_0^{r_m}  \sum_{j=1}^n \alpha_jl_j(A\tau e^{i\Theta}) d\tau =\\ 
  =\ln g(0)+(NC+N+1) \max_{\Theta\in[0,2\pi]^n} \int_0^{r_m}  \sum_{j=1}^n \frac{r_j}{r_m}l_j\left(\frac{\tau}{r_m}R e^{i\Theta}\right) d\tau.
   \end{gather*}
   Thus, we conclude that \eqref{nonzerogrowth} holds.
   Estimate \eqref{zerogrowth} can be deduced by analogy. 
   Theorem \ref{bordte22} is proved.
\end{proof}
We will write $u(r,\theta)=l(re^{i\theta}).$  Theorem \ref{bordte22} implies the following proposition for $n=1.$
\begin{corollary} \label{growthcor3joint}
	   Let $l(re^{i\theta})$  be a positive continuously differentiable function in variable $r\in[0,+\infty)$ for every $\theta\in[0,2\pi].$  If an entire function $f$ has bounded $l$-index $N=N(f,l)$ 
		and  there exists $C>0$ such that 
	$\varlimsup\limits_{r\to +\infty}\max\limits_{\theta\in[0,2\pi]} \frac{(-u'_r(r,\theta))^+}{l^2(re^{i\Theta})} = C$   then 
	\begin{equation*}
	\varlimsup_{r\to +\infty}   \frac{\ln \max\{|f(z)\colon  |z|=r\}}{\max\limits_{\theta\in[0,2\pi]} \int_0^{r}   
 l\left(\tau e^{i\theta}\right) d\tau}\leq (C+1)N+1.
	\end{equation*}
\end{corollary}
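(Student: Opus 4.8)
The plan is to obtain the corollary as the specialization of Theorem~\ref{bordte22} to $n=1$, the only genuine change being the passage from the uniform bound \eqref{suplinf} to the asymptotic $\varlimsup$ hypothesis. First I would set $n=1$ in the theorem, so that $R=r$, $r_m=r$, $\alpha_1=1$, $u_1(t)=l(te^{i\theta})$, and the quantities $\beta,\gamma$ from the proof of Theorem~\ref{bordte22} collapse to $\beta(t)=(N+1)\,l(te^{i\theta})$ (the maximum over $0\le k\le N$ of $(k+1)l$ is attained at $k=N$ since $l>0$) and $\gamma(t)=N\,(-u_1'(t))^+/l(te^{i\theta})$, while the normalizing weight becomes $\widetilde\beta(t)=l(te^{i\theta})$. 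Using the differential inequality \eqref{deqr1} for the auxiliary maximum $g(t)=\max\{|f^{(k)}(te^{i\theta})|/(k!\,l^k(te^{i\theta}))\colon k\le N\}$ together with $|f(te^{i\theta})|\le g(t)$ (the index $k=0$ is admissible), I obtain, for every fixed $\theta$,
\[
\ln|f(re^{i\theta})|\le \ln g(0)+\int_0^r\bigl(\beta(t)+\gamma(t)\bigr)\,dt .
\]

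The heart of the argument is to replace the uniform control of $\gamma/\widetilde\beta$ by the $\varlimsup$ hypothesis. Given $\varepsilon>0$, the assumption $\varlimsup_{r\to+\infty}\max_{\theta}(-u'_r(r,\theta))^+/l^2(re^{i\theta})=C$ furnishes an $r_0=r_0(\varepsilon)$, independent of $\theta$, such that for all $t\ge r_0$ and all $\theta$ one has $\gamma(t)\le N(C+\varepsilon)\,l(te^{i\theta})$, and hence the pointwise bound $\beta(t)+\gamma(t)\le\bigl(N+1+N(C+\varepsilon)\bigr)\,l(te^{i\theta})$. I would then split $\int_0^r=\int_0^{r_0}+\int_{r_0}^r$; on the compact segment $[0,r_0]$ the integrand is continuous, so $M_0:=\max_\theta\int_0^{r_0}(\beta+\gamma)\,dt$ is a constant independent of $r$, while on $[r_0,r]$ the pointwise bound and $\int_{r_0}^r\le\int_0^r$ give $\max_\theta\int_{r_0}^r(\beta+\gamma)\,dt\le\bigl(N+1+N(C+\varepsilon)\bigr)\,D(r)$, where $D(r):=\max_\theta\int_0^r l(te^{i\theta})\,dt$.

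Combining the two displays, taking $\max_\theta$ (using $\ln\max_\theta|f(re^{i\theta})|=\max_\theta\ln|f(re^{i\theta})|$ and subadditivity of the maximum over the split), yields
\[
\ln\max\{|f(z)|\colon|z|=r\}\le \ln g(0)+M_0+\bigl((C+1)N+1+N\varepsilon\bigr)\,D(r).
\]
Dividing by $D(r)$ and letting $r\to+\infty$ kills the constant terms because $D(r)\to+\infty$ (this is \eqref{intliminf} specialized to $n=1$, which I would invoke as the normalizing hypothesis), leaving $\varlimsup_{r\to+\infty}\ln\max\{|f(z)|\colon|z|=r\}/D(r)\le (C+1)N+1+N\varepsilon$; since $\varepsilon>0$ is arbitrary, the asserted bound $(C+1)N+1$ follows.

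The \emph{main obstacle}, and the only place where care is needed, is the $\theta$-dependence: the optimal argument $\theta$ may drift with $r$, so the estimates must be made uniform in $\theta$ before the outer $\max_\theta$ is applied. This is precisely what the hypothesis supplies, since $\max_\theta$ already sits inside the $\varlimsup$; consequently the threshold $r_0$ does not depend on $\theta$ and the split of the integral survives the maximization. A secondary point, which I would check explicitly, is that the initial-segment contribution $M_0$ is genuinely $O(1)$ and therefore asymptotically negligible against the diverging denominator $D(r)$.
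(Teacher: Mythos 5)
Your proposal is correct, and it follows the route the paper intends: specialize the machinery of Theorem~\ref{bordte22} (the function $g$, the quantities $\beta,\gamma,\widetilde\beta$, and inequality \eqref{deqr1}) to $n=1$. Note, however, that the paper offers no actual proof of the corollary --- it only asserts ``Theorem~\ref{bordte22} implies the following proposition for $n=1$'' --- and this assertion is not literally a one-line specialization: hypothesis \eqref{suplinf} of the theorem is a \emph{global} supremum bound, so a direct application would only yield $(C'+1)N+1$ with $C'=\sup_{r,\theta}(-u'_r)^+/l^2\geq C$, which is weaker than the stated $(C+1)N+1$. Your $\varepsilon$-split of the integral at $r_0(\varepsilon)$, with the compact piece absorbed into an $O(1)$ constant, is exactly the bridging argument needed to pass from the $\varlimsup$ hypothesis to the sharp constant, and your handling of the $\theta$-uniformity (the $\max_\theta$ sits inside the $\varlimsup$, so $r_0$ is independent of $\theta$) is right.

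One loose end should be tightened: you \emph{invoke} \eqref{intliminf}, i.e.\ $D(r)=\max_\theta\int_0^r l(te^{i\theta})\,dt\to+\infty$, as a hypothesis, but the corollary does not state it; without it your constants $\ln g(0)+M_0$ would not be negligible. Fortunately it is a consequence of the $\varlimsup$ hypothesis and should be derived rather than assumed: for $t\geq r_0$ and all $\theta$ one has $-u'_t(t,\theta)\leq(-u'_t(t,\theta))^+\leq (C+\varepsilon)\,u^2(t,\theta)$, i.e.\ $\frac{d}{dt}\bigl(1/u(t,\theta)\bigr)\leq C+\varepsilon$, whence
\begin{equation*}
l(te^{i\theta})=u(t,\theta)\geq \frac{1}{M+(C+\varepsilon)(t-r_0)},\qquad M:=\Bigl(\min_{\theta\in[0,2\pi]}l(r_0e^{i\theta})\Bigr)^{-1}<\infty,
\end{equation*}
uniformly in $\theta$ (positivity and continuity of $l$ give $M<\infty$). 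Integrating, $\int_0^r l(te^{i\theta})\,dt$ grows at least logarithmically in $r$ for every $\theta$, so $D(r)\to+\infty$ and your division step is justified. With this one-line addition your argument proves the corollary exactly as stated.
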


\begin{remark}
	Our result is sharper than known result of Sheremeta which is obtained in a case $n=1,$ $C\neq 0$ 
	and $l=l(|z|).$
		Indeed, corresponding theorem \cite[p.~83]{sher} claims that 
		\begin{equation*}
	\varlimsup_{r\to +\infty}   \frac{\ln \max\{|f(z)\colon  |z|=r\}}{\int_0^{r}   
		l(\tau) d\tau}\leq (C+1)(N+1).
	\end{equation*}	
	Obviously, that $NC+N+1 < (C+1)(N+1)$ for $C\neq 0$ and $N\neq 0.$
\end{remark}

Estimate \eqref{zerogrowth} is sharp. It is easy to check for function $F(z_1,z_2)=\exp(z_1z_2),$  $l_1(z_1,z_2)=|z_2|+1,$  $l_2(z_1,z_2)=|z_1|+1.$ 
Then $N(F,\mathbf{L})=0$ and $\ln \max\{|F(z)|\colon z\in T^2(\mathbf{0},R)\}=r_1r_2.$


Department of Advanced Mathematics

Ivano-Frankivs'k National Technical University of Oil and Gas

andriykopanytsia@gmail.com

Department of Function Theory and Theory of Probability

Ivan Franko National University of Lviv

olskask@gmail.com


\end{document}